\nonstopmode \numberwithin{equation}{section}
\newtheorem{theorem}{Theorem}[section]
\newtheorem{proposition}{Proposition}[section]
\newtheorem{lemma}{Lemma}[section]
\newtheorem{remark}{Remark}[section]
\begin{document}

\title[The $\mathtt{k}-$ Struve function]{Certain inequalities involving the $\mathtt{k}$-Struve function}

\author{Kottakkaran Sooppy  Nisar}
\address{Department of Mathematics, College of Arts and Science-Wadi Al dawser, 11991,\\
Prince Sattam bin Abdulaziz University, Saudi Arabia}
\email{ksnisar1@gmail.com, n.sooppy@psau.edu.sa}

\author{Saiful Rahman Mondal}
\address{Department of Mathematics\\
King Faisal University, Al Ahsa 31982, Saudi Arabia}
\email{smondal@kfu.edu.sa}

\author{Junesang Choi}
\address{Department of Mathematics\\
Dongguk University, Gyeongju
38066, Republic of Korea}
\email{junesang@mail.dongguk.ac.kr}

\subjclass[2010]{33C10, 26D07}
\keywords{$\mathtt{k}$-Struve function; $\mathtt{k}$-gamma function; $\mathtt{k}$-beta function;
  $\mathtt{k}$-digamma function; Tur\'an type inequalities}

\begin{abstract}
We aim to introduce a $\mathtt{k}$-Struve function and investigate its various properties,
including mainly certain inequalities associated this function.
One of the inequalities given here is pointed out to be related to the so-called classical
Tur\'an type inequality.
We also present a differential equation, several recurrence relations, and
integral representations for this  $\mathtt{k}$-Struve function.

\end{abstract}

\maketitle
\section{Introduction and preliminaries}\label{Intro}

  D\'iaz and Pariguan \cite{Diaz} introduced and investigated the so-called
  $\mathtt{k}$-gamma function
  \begin{equation}\label{k-gamma}
    \Gamma_{\mathtt{k}}(x):= \int_0^\infty t^{x-1}e^{-\frac{t^{\mathtt{k}}}{\mathtt{k}}}\, dt
    \quad \left(\Re(x)>0;\,\mathtt{k} \in \mathbb{R}^+ \right).
  \end{equation}
  Here and in the following, let $\mathbb{C}$,   $\mathbb{R}$,  $\mathbb{R}^+$, $\mathbb{N}$, and $\mathbb{Z}^{-}$ be the sets of complex numbers, real numbers,  positive real numbers, positive integers,  and negative integers, respectively, and let $\mathbb{N}_0:=\mathbb{N} \cup \{0\}$.
For various properties of the $\mathtt{k}$-gamma function and its applications to generalize other related functions
 such as $\mathtt{k}$-beta function and $\mathtt{k}$-digamma function, we refer the interested reader, for example, to
 \cite{Diaz, Kwara14, Mubeen13} and the references cited therein.

 Nantomah and Prempeh \cite{Kwara14} defined the $k$-digamma function $\Psi_\mathtt{k}:=\Gamma_\mathtt{k}'/\Gamma_\mathtt{k}$
whose series representation is given as follows:
\begin{equation}\label{def-digamma}
 \Psi_\mathtt{k}(t):=\frac{\log \mathtt{k}-\gamma}{\mathtt{k}}-\frac{1}{t}
+\sum_{n=1}^\infty \frac{t}{n\mathtt{k}(n\mathtt{k}+t)} \quad \left(\mathtt{k} \in \mathbb{R}^+;\, t
\in \mathbb{C} \setminus \mathtt{k} \mathbb{Z}^{-} \right),
\end{equation}
where $\gamma$ is the Euler-Mascheroni constant (see, e.g., \cite[Section 1.2]{Sr-Ch-12}).
A calculation yields
\begin{equation}\label{def-digamma-2}
\Psi_\mathtt{k}'(t)=\sum_{n=0}^\infty  \frac{1}{(n\mathtt{k}+t)^2} \quad \left(\mathtt{k},\, t \in \mathbb{R}^+ \right).
\end{equation}
Clearly, $\Psi_\mathtt{k}(t)$ is increasing on $(0, \infty)$.

Tur\'an \cite{Turan} proved that the Legendre polynomials $P_n(x)$ satisfy the following determinantal inequality
\begin{equation}\label{turan-ine}
\left|
          \begin{array}{cc}
            P_n(x) & P_{n+1}(x) \\
            P_{n+1}(x) & P_{n+2}(x) \\
          \end{array}
      \right|
\leq  0 \quad \left(-1 \le x \le 1;\, n \in \mathbb{N}_0\right),
\end{equation}
where the equality occurs only when $x = \pm 1$.
 Recently, many researchers have applied the above classical inequality  \eqref{turan-ine}  in various polynomials and functions such as ultraspherical polynomials, Laguerre polynomials,  Hermite polynomials,
Bessel functions of the first kind, modified Bessel functions, and polygamma functions.
Karlin and Szeg\"o \cite{Karlin} named such determinants as in \eqref{turan-ine}   Tur\'anians.

In this paper, we  consider the following  $\mathtt{k}$-Struve  function (cf. \cite[p.496, Entry 12.1.3]{ABRAMOWITZ})
\begin{equation}\label{k-Struve}
\mathtt{S}_{\nu,c}^{\mathtt{k}}(x):=\sum_{r=0}^{\infty}\frac{(-c)^r}{\Gamma_{\mathtt{k}}(r\mathtt{k}+\nu+\frac{3\mathtt{k}}{2})\Gamma(r+\frac{3}{2})}
\left(\frac{x}{2}\right)^{2r+\frac{\nu}{\mathtt{k}}+1}.
\end{equation}
Then we investigate the $\mathtt{k}$-Struve  function \eqref{k-Struve} as follows:
 We establish certain inequalities involving $\mathtt{S}_{\nu,c}^{\mathtt{k}}$,
one of which is shown to be related to the Tur\'an type inequality;
 we show that the $\mathtt{k}$-Struve  function
satisfies a second order non-homogeneous differential equation; we present an integral representation and recurrence relations
for the $\mathtt{k}$-Struve  function.

\section{Inequalities}\label{sec2}

The modified $\mathtt{k}$-Struve function is given as
\begin{equation}\label{MSF}
 L_{\nu}^{\mathtt{k}}(x):= \mathtt{S}_{\nu,-1}^{\mathtt{k}}(x),
\end{equation}
which is normalized and denoted by $\mathcal{L}_{\nu}^{\mathtt{k}}$ as follows:
\begin{equation}\label{modified-kstruve}
\mathcal{L}_{\nu}^{\mathtt{k}}(x)=\left(\frac{2}{x}\right)^{\frac{\nu}{\mathtt{k}}}
\Gamma_{\mathtt{k}}\left(\nu+\frac{3\mathtt{k}}{2}\right)L_{\nu}^{\mathtt{k}}(x)
=\sum_{r=0}^{\infty}f_{r}(\nu,\mathtt{k} )\, x^{2r+1},
\end{equation}
where
\begin{align*}
f_{r}(\nu,\mathtt{k} ): =\frac{\Gamma_{\mathtt{k}} \left(\nu+\frac{3\mathtt{k}}{2}\right)}
{\Gamma_{\mathtt{k}}(r\mathtt{k}+\nu+\frac{3}{2})\Gamma(r+\frac{3}{2})\,2^{2r+1}}.
\end{align*}

Here, we investigate monotonicity and log-convexity involving the $\mathcal{L}_{\nu}^{\mathtt{k}}$.
To do this, we recall some known useful properties
which are given in the following lemma (see \cite{Biernacki-Krzy}).

\vskip 3mm

\begin{lemma}\label{lemma:1}
Consider the power series $f(x)=\sum_{\mathtt{k}=0}^\infty a_\mathtt{k} x^\mathtt{k}$ and $g(x)=\sum_{\mathtt{k}=0}^\infty b_\mathtt{k} x^\mathtt{k}$, where $a_\mathtt{k} \in \mathbb{R}$ and $b_\mathtt{k} \in \mathbb{R}^+ $ $(\mathtt{k} \in \mathbb{N}_0)$. Further suppose that both series converge on $|x|<r$. If the sequence $\{a_\mathtt{k}/b_\mathtt{k}\}_{\mathtt{k}\geq 0}$ is increasing $($or decreasing$)$, then the function $x \mapsto f(x)/g(x)$ is also increasing $($or decreasing$)$ on $(0,r)$.

If both $f$ and  $g$ are even, or both are odd functions, then the above results will be applicable.
\end{lemma}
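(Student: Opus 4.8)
The plan is to control the sign of the derivative of the quotient $h:=f/g$. Since every $b_\mathtt{k}$ is a positive real, $g(x)=\sum_{\mathtt{k}\ge 0}b_\mathtt{k} x^\mathtt{k}>0$ on $[0,r)$, so $h$ is well defined and differentiable there, and the sign of $h'$ coincides with the sign of the Wronskian-type expression $W(x):=f'(x)g(x)-f(x)g'(x)$, because $h'=W/g^2$. The first step is to expand $W$ as a power series on the common disc of convergence: differentiating $f$ and $g$ term by term and forming the Cauchy products, a short index shift gives
\[
W(x)=\sum_{n=0}^{\infty}c_n x^n,\qquad c_n=\sum_{i+j=n+1}(i-j)\,a_i b_j .
\]

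The decisive step is to show that the monotonicity of $\{a_\mathtt{k}/b_\mathtt{k}\}$ forces $c_n\ge 0$ for every $n$ (in the increasing case). I would symmetrise each coefficient: grouping the summand indexed by $(i,j)$ with the one indexed by $(j,i)$ (both obey $i+j=n+1$), the pair contributes $(i-j)\bigl(a_i b_j-a_j b_i\bigr)$, while a diagonal term $i=j$ (possible only when $n+1$ is even) contributes $0$. For $i>j$ the hypothesis $a_i/b_i\ge a_j/b_j$, together with $b_i,b_j>0$, yields $a_i b_j-a_j b_i\ge 0$; hence every grouped contribution is nonnegative and $c_n\ge 0$. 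Consequently $W(x)\ge 0$ on $(0,r)$, so $h'(x)\ge 0$ there and $h$ is increasing. The decreasing case is identical after reversing all inequalities (or after replacing $f$ by $-f$).

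For the final assertion about even or odd functions, I would reduce to the case just proved via the substitution $x\mapsto x^2$, which is increasing on $(0,r)$. If $f$ and $g$ are even, write $f(x)=F(x^2)$ and $g(x)=G(x^2)$ with $F(y)=\sum_\mathtt{k} a_{2\mathtt{k}}y^\mathtt{k}$ and $G(y)=\sum_\mathtt{k} b_{2\mathtt{k}}y^\mathtt{k}$, both convergent for $|y|<r^2$; since a subsequence of an increasing (decreasing) sequence is increasing (decreasing), $\{a_{2\mathtt{k}}/b_{2\mathtt{k}}\}$ is monotone in the same sense, the main part applies to $F/G$ on $(0,r^2)$, and $h(x)=(F/G)(x^2)$ inherits the monotonicity on $(0,r)$. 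If $f$ and $g$ are odd, factor $f(x)=x\widetilde F(x^2)$ and $g(x)=x\widetilde G(x^2)$ so that $h(x)=\widetilde F(x^2)/\widetilde G(x^2)$, and run the same argument with $\widetilde F,\widetilde G$.

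The only genuine obstacle is the bookkeeping in the first step: one must track the exponent shifts in $f'g$ and $fg'$ carefully so that the antisymmetric form $c_n=\sum_{i+j=n+1}(i-j)a_ib_j$ actually emerges. Once this identity is in hand, the positivity claim is a one-line pairing argument and the even/odd extension is a purely formal substitution.
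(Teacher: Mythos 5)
Your argument is correct: the identity $f'g-fg'=\sum_n\bigl(\sum_{i+j=n+1}(i-j)a_ib_j\bigr)x^n$ checks out, the pairing of $(i,j)$ with $(j,i)$ together with $b_i,b_j>0$ does give nonnegative coefficients in the increasing case, and the reduction of the even/odd case via $x\mapsto x^2$ is the right way to read the final sentence of the statement. The paper itself supplies no proof of this lemma --- it is quoted from Biernacki and Krzy\.z --- and your derivation is essentially the standard proof of that cited result, so there is nothing to reconcile beyond noting that your write-up fills a gap the authors left to the reference.
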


\vskip 3mm

\begin{theorem}\label{Th4.1}
Let $\mathtt{k} \in \mathbb{R}^+$ be fixed. Then the following statements hold.
\begin{enumerate}
\item[(i)]For $\nu \geq \mu>-{3 \mathtt{k}}/{2}$, then the function $x \mapsto \mathcal{{L}}_{\mu}^{\mathtt{k}}(x)/\mathcal{L}_{\nu}^{\mathtt{k}}(x)$ is increasing on $\mathbb{R}$. \label{a}

\item[(ii)] The function $\nu \mapsto \mathcal{L}_{\nu}^{\mathtt{k}}(x)$ is decreasing for a fixed $x \in [0, \infty)$
   and increasing for a fixed $x \in (-\infty,0)$.  Also, the function $\nu \mapsto \mathcal{L}_{\nu}^{\mathtt{k}}(x)$ is
 log-convex on $(-{3 \mathtt{k}}/{2}, \infty)$ for a fixed $x \in \mathbb{R}^+$. \label{c}

 \item[(iii)]  The function $\nu \mapsto L_{\nu+\mathtt{k}}^{\mathtt{k}}(x)/L_{\nu}^{\mathtt{k}}(x)$ is decreasing on $(-{3 \mathtt{k}}/{2}, \infty)$ for a fixed $x \in \mathbb{R}^+$. \label{b}
\end{enumerate}
\end{theorem}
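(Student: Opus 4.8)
The plan is to dispose of the three assertions in turn, in each case stripping off the normalising factors so that matters reduce to an elementary property of the coefficients $f_r(\nu,\mathtt{k})$ of \eqref{modified-kstruve}, the functional equation $\Gamma_{\mathtt{k}}(t+\mathtt{k})=t\,\Gamma_{\mathtt{k}}(t)$ (\cite{Diaz}), and the monotonicity/convexity of the $\mathtt{k}$-digamma function recorded in \eqref{def-digamma-2}. For (i): since $\mathcal{L}_\mu^{\mathtt{k}}$ and $\mathcal{L}_\nu^{\mathtt{k}}$ are odd power series, Lemma \ref{lemma:1} (whose parity clause covers odd $f,g$) reduces the claim to the monotonicity of $r\mapsto f_r(\mu,\mathtt{k})/f_r(\nu,\mathtt{k})$; using the functional equation, the ratio of two consecutive terms of this sequence telescopes to $(r\mathtt{k}+\nu+\tfrac{3\mathtt{k}}{2})/(r\mathtt{k}+\mu+\tfrac{3\mathtt{k}}{2})\geq 1$, the inequality holding exactly because $\nu\geq\mu$. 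For (ii): a direct computation gives $\partial_\nu\log f_r(\nu,\mathtt{k})=\Psi_{\mathtt{k}}(\nu+\tfrac{3\mathtt{k}}{2})-\Psi_{\mathtt{k}}(r\mathtt{k}+\nu+\tfrac{3\mathtt{k}}{2})\leq0$, since $\Psi_{\mathtt{k}}$ is increasing, and $\partial_\nu^2\log f_r(\nu,\mathtt{k})=\Psi_{\mathtt{k}}'(\nu+\tfrac{3\mathtt{k}}{2})-\Psi_{\mathtt{k}}'(r\mathtt{k}+\nu+\tfrac{3\mathtt{k}}{2})\geq0$, since by \eqref{def-digamma-2} $\Psi_{\mathtt{k}}'$ is decreasing on $(0,\infty)$; summing the first inequality against $x^{2r+1}$ yields the monotonicity of $\nu\mapsto\mathcal{L}_\nu^{\mathtt{k}}(x)$ (decreasing for $x\geq0$, increasing for $x<0$), and the second makes each $\nu\mapsto f_r(\nu,\mathtt{k})$ log-convex, so for fixed $x>0$ the series $\mathcal{L}_\nu^{\mathtt{k}}(x)=\sum_r f_r(\nu,\mathtt{k})x^{2r+1}$ is a sum of log-convex functions with positive weights and hence log-convex by H\"older's inequality. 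Parts (i) and (ii) are routine once set up this way.

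The content lies in (iii). Combining \eqref{modified-kstruve} and \eqref{MSF} with $\Gamma_{\mathtt{k}}(\mu+\tfrac{5\mathtt{k}}{2})=(\mu+\tfrac{3\mathtt{k}}{2})\Gamma_{\mathtt{k}}(\mu+\tfrac{3\mathtt{k}}{2})$ gives the identity
\[
\frac{L_{\nu+\mathtt{k}}^{\mathtt{k}}(x)}{L_\nu^{\mathtt{k}}(x)}=\frac{x}{2\bigl(\nu+\tfrac{3\mathtt{k}}{2}\bigr)}\cdot\frac{\mathcal{L}_{\nu+\mathtt{k}}^{\mathtt{k}}(x)}{\mathcal{L}_\nu^{\mathtt{k}}(x)}.
\]
By (ii) the factor $\mathcal{L}_{\nu+\mathtt{k}}^{\mathtt{k}}(x)/\mathcal{L}_\nu^{\mathtt{k}}(x)$ is in fact \emph{increasing} in $\nu$, so what must be shown is that the decreasing factor $1/(\nu+\tfrac{3\mathtt{k}}{2})$ wins. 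Writing $L_\nu^{\mathtt{k}}(x)=(x/2)^{\nu/\mathtt{k}}H(\nu)$ with $H(\nu)=\sum_{r\geq0}\frac{(x/2)^{2r+1}}{\Gamma(r+\tfrac{3}{2})\,\Gamma_{\mathtt{k}}(r\mathtt{k}+\nu+\tfrac{3\mathtt{k}}{2})}$, the identity shows it suffices to prove that $\nu\mapsto L_\nu^{\mathtt{k}}(x)$, equivalently $\nu\mapsto H(\nu)$, is log-concave on $(-\tfrac{3\mathtt{k}}{2},\infty)$ — equivalently that $\partial_\nu^2\log\mathcal{L}_\nu^{\mathtt{k}}(x)\leq\Psi_{\mathtt{k}}'(\nu+\tfrac{3\mathtt{k}}{2})$. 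I would prove this by one of two routes. Route (a): observe that, extended to real $r\geq0$, the summand $T(\nu,r)=\frac{(x/2)^{2r+1}}{\Gamma(r+\tfrac{3}{2})\,\Gamma_{\mathtt{k}}(r\mathtt{k}+\nu+\tfrac{3\mathtt{k}}{2})}$ is \emph{jointly} log-concave in $(\nu,r)$, since $\log T=-\log\Gamma(r+\tfrac{3}{2})+(\text{affine})-\log\Gamma_{\mathtt{k}}(r\mathtt{k}+\nu+\tfrac{3\mathtt{k}}{2})$ is the sum of a function concave in $r$ and a function concave in $(\nu,r)$ (as $\log\Gamma_{\mathtt{k}}$ is convex and the argument is affine), and then invoke a Pr\'ekopa-type marginalisation theorem applied to a suitable integral representation of $L_\nu^{\mathtt{k}}$. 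Route (b): extract from \eqref{k-Struve}, by shifting the summation index and using the functional equation, the three-term recurrence in $\nu$ for $L_\nu^{\mathtt{k}}$, and run the classical continued-fraction/induction argument (as for modified Bessel functions, cf. the Tur\'an-type literature mentioned in Section \ref{Intro}), anchored by the fact that $L_{\nu+\mathtt{k}}^{\mathtt{k}}(x)/L_\nu^{\mathtt{k}}(x)\to0$ as $\nu\to\infty$.

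The main obstacle is precisely this log-concavity of $\nu\mapsto L_\nu^{\mathtt{k}}(x)$. It does not follow from the log-convexity of $\mathcal{L}_\nu^{\mathtt{k}}$ established in (ii): those are convexity statements of opposite sign, compatible only because of the $\Gamma_{\mathtt{k}}(\nu+\tfrac{3\mathtt{k}}{2})$-twist relating $L_\nu^{\mathtt{k}}$ and $\mathcal{L}_\nu^{\mathtt{k}}$, and it is the quantitative control of that twist — via the joint-log-concavity/Pr\'ekopa argument, or via the recurrence together with the asymptotics — that constitutes the real work in proving Theorem \ref{Th4.1}.
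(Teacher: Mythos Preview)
Your treatment of (i) and (ii) is essentially the paper's: Lemma~\ref{lemma:1} applied to the odd series for (i), and the signs of $\Psi_{\mathtt{k}}$ and $\Psi_{\mathtt{k}}'$ for (ii).

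For (iii), however, the paper takes a much shorter route that entirely sidesteps the log-concavity of $\nu\mapsto L_\nu^{\mathtt{k}}(x)$ you single out as the crux. The paper differentiates the conclusion of (i) \emph{with respect to $x$}: since $x\mapsto \mathcal{L}_\mu^{\mathtt{k}}(x)/\mathcal{L}_\nu^{\mathtt{k}}(x)$ is increasing, the quotient-rule numerator is nonnegative. It then feeds in the recurrence \eqref{rec-2} (with $c=-1$), namely
\[
\frac{d}{dx}\bigl(x^{-\nu/\mathtt{k}}L_\nu^{\mathtt{k}}(x)\bigr)=\frac{2^{-\nu/\mathtt{k}}}{\sqrt{\pi}\,\Gamma_{\mathtt{k}}(\nu+\tfrac{3\mathtt{k}}{2})}+x^{-\nu/\mathtt{k}}L_{\nu+\mathtt{k}}^{\mathtt{k}}(x),
\]
and after one line of algebra the inequality becomes
\[
x^{-(\mu+\nu)/\mathtt{k}}\bigl(L_{\mu+\mathtt{k}}^{\mathtt{k}}L_\nu^{\mathtt{k}}-L_{\nu+\mathtt{k}}^{\mathtt{k}}L_\mu^{\mathtt{k}}\bigr)\ \geq\ \frac{2^{-(\mu+\nu)/\mathtt{k}}}{\sqrt{\pi}\,\Gamma_{\mathtt{k}}(\mu+\tfrac{3\mathtt{k}}{2})\Gamma_{\mathtt{k}}(\nu+\tfrac{3\mathtt{k}}{2})}\bigl(\mathcal{L}_\mu^{\mathtt{k}}(x)-\mathcal{L}_\nu^{\mathtt{k}}(x)\bigr),
\]
whose right-hand side is $\geq 0$ by the first statement of (ii). Dividing by $L_\mu^{\mathtt{k}}L_\nu^{\mathtt{k}}>0$ gives (iii). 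Thus the ``real work'' you anticipate is simply absent from the paper's argument: the $x$-monotonicity of (i), the $\nu$-monotonicity of (ii), and the derivative recurrence together suffice.

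Regarding your proposed routes for (iii): Route~(a) has a genuine gap. Pr\'ekopa--Leindler governs integrals, not sums over $\mathbb{N}_0$, so the (correct) joint log-concavity of $T(\nu,r)$ does not by itself yield log-concavity of $\sum_r T(\nu,r)$; and the available integral representation \eqref{thm2-eq2} does \emph{not} have a jointly log-concave integrand in $(\nu,t)$, since $\partial_\nu^2$ of the log-integrand vanishes while the mixed partial $\partial_\nu\partial_t$ does not, making the Hessian indefinite. Route~(b) is plausible in spirit, but the three-term recurrence has step $\mathtt{k}$ and so, without further argument, would only compare parameters lying on a single coset $\nu_0+\mathtt{k}\mathbb{Z}$, not all of $(-3\mathtt{k}/2,\infty)$. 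Neither route is needed once you exploit the paper's device of differentiating in $x$.
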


\begin{proof} To prove (i),
  recall the series in \eqref{modified-kstruve}. Clearly,
\begin{align*}
\frac{\mathcal{L}_{\nu}^{\mathtt{k}}(x)}{\mathcal{L}_{\mu}^{\mathtt{k}}(x)} =\frac{\sum\limits_{r=0}^{\infty}f_{r}(\nu, \mathtt{k})\, x^{2r+1}}{\sum\limits_{r=0}^{\infty}f_{r}(\mu,\mathtt{k})\,x^{2r+1}}.
\end{align*}
Denote $w_{r}:={f_{r}(\nu, \mathtt{k})}/{f_{r}(\mu, \mathtt{k})}$. Then
\begin{align*}
w_{r}=\frac{\Gamma_{\mathtt{k}}(\nu+\frac{3}{2}\mathtt{k})\Gamma_{\mathtt{k}}(r\mathtt{k}+\mu+\frac{3}{2}\mathtt{k})}
{\Gamma_{\mathtt{k}}(r\mathtt{k}+\nu+\frac{3}{2}\mathtt{k})\Gamma_{\mathtt{k}}(\mu+\frac{3}{2}\mathtt{k})}.
\end{align*}
Appealing to the relation \eqref{FR-Gamma-k}, we find
\begin{align*}
\frac{w_{r+1}}{w_{r}}&=\frac{\Gamma_{\mathtt{k}}(\nu+\frac{3}{2}\mathtt{k})\Gamma_{\mathtt{k}}(r\mathtt{k}+\mathtt{k}+\mu+\frac{3}{2}\mathtt{k})}{\Gamma_{\mathtt{k}}(r\mathtt{k}+\mathtt{k}+\nu+\frac{3}{2}\mathtt{k})\Gamma_{\mathtt{k}}(\mu+\frac{3}{2}\mathtt{k})}\frac{\Gamma_{\mathtt{k}}(\mu+\frac{3}{2}\mathtt{k})\Gamma_{\mathtt{k}}(r\mathtt{k}+\nu+\frac{3}{2}\mathtt{k})}{\Gamma_{\mathtt{k}}
(r\mathtt{k}+\mu+\frac{3}{2}\mathtt{k})\Gamma_{\mathtt{k}}(\nu+\frac{3}{2}\mathtt{k})}\\
&=\frac{(r\mathtt{k}+\mu+\frac{3}{2}\mathtt{k})\Gamma(r\mathtt{k}+\mu+\frac{3}{2}\mathtt{k})\Gamma_{\mathtt{k}}(r\mathtt{k}+\nu+\frac{3}{2}\mathtt{k})}{(r\mathtt{k}+\nu+\frac{3}{2}\mathtt{k})\Gamma(r\mathtt{k}+\nu+\frac{3}{2}\mathtt{k})\Gamma_{\mathtt{k}}(r\mathtt{k}+\mu+\frac{3}{2}\mathtt{k})}
=\frac{r\mathtt{k}+\mu+\frac{3}{2}\mathtt{k}}{r\mathtt{k}+\nu+\frac{3}{2}\mathtt{k}}\leq 1,
\end{align*}
whose last inequality is valid from  the condition $\nu\geq \mu>-{3}\mathtt{k}/2$.
Finally, the result (i)  follows from  Lemma \ref{lemma:1}.

\vskip 3mm

For (ii), since $2\nu>-3\mathtt{k}$, we first observe the coefficients $f_{r}(\nu, \mathtt{k})>0$ for all $r \in \mathbb{N}_0$.
Then the logarithmic derivative of $f_{r}(\nu, \mathtt{k})$ with respect to $\nu$ is
\begin{equation*}
 \frac{f_{r}^{'}(\nu, \mathtt{k})}{f_{r}(\nu, \mathtt{k})}=\Psi_{\mathtt{k}}\Big(\nu+\frac{3}{2}\mathtt{k}\Big)-\Psi_{\mathtt{k}}\Big(r\mathtt{k}+\nu+\frac{3}{2}\mathtt{k}\Big)\leq 0,
\end{equation*}
whose last inequality follows from \eqref{def-digamma}.
 Since $f_{r}(\nu, \mathtt{k})>0$ $\left(r \in \mathbb{N}_0;\,2\nu>-3\mathtt{k}\right)$, ${f_{r}}'(\nu, \mathtt{k})\leq 0$ $\left(r \in \mathbb{N}_0;\,2\nu>-3\mathtt{k}\right)$.
Hence $\nu \mapsto f_{r}(\nu, \mathtt{k})$ is decreasing on $(-3 \mathtt{k}/2,\infty)$. This implies that, for $\mu\geq \nu>-{3}\mathtt{k}/{2}$,
\begin{equation*}
  \sum_{r=0}^{\infty}f_{r}(\nu, \mathtt{k})x^{2r+1}\geq \sum_{r=0}^{\infty}f_{r}(\mu, \mathtt{k})x^{2r+1} \quad (x \in [0,\infty))
\end{equation*}
and
\begin{equation*}
  \sum_{r=0}^{\infty}f_{r}(\nu, \mathtt{k})x^{2r+1}\leq \sum_{r=0}^{\infty}f_{r}(\mu, \mathtt{k})x^{2r+1} \quad (x \in (-\infty, 0)).
\end{equation*}
This proves the first statement of (ii).

\vskip 3mm
In view of \eqref{def-digamma-2}, we have
\begin{equation*}
  \frac{\partial^{2}}{\partial \nu^{2}}\left(\log(f_{r}(\nu, \mathtt{k}))\right)
   = \sum_{n=0}^{\infty}\left\{\frac{1}{(n\mathtt{k}+\nu+\frac{3}{2}\mathtt{k})^{2}}
    -\frac{1}{(n\mathtt{k}+r\mathtt{k}+\nu+\frac{3}{2}\mathtt{k})^{2}}\right\}\geq 0
\end{equation*}
for all $\mathtt{k} \in \mathbb{R}^+$ and $2\nu>-3\mathtt{k}$.
Therefore $\nu\mapsto f_{r}(\nu)$ is log-convex on $(-3\mathtt{k}/2 ,\infty)$.
Since a sum of log-convex functions is log-convex, the second statement of (ii)
is proved.

\vskip 3mm
For (iii), it is obvious from (i) that
\begin{equation}\label{(iii)-pf1}
 \frac{d}{dx}\left(\frac{\mathcal{L}_{\mu}^{\mathtt{k}}(x)}
{\mathcal{L}_{\nu}^{\mathtt{k}}(x)}\right)\geq 0,
\end{equation}
for all $x \in \mathbb{R}^+$ and $\nu\geq \mu>-{3}\mathtt{k}/2$.
In view of the relation \eqref{modified-kstruve},
\eqref{(iii)-pf1} is equivalent to
\begin{align}\label{11}
\left(x^{-\frac{\mu}{\mathtt{k}}}\mathtt{L}_{\mu}^{\mathtt{k}}(x)\right)'
\left(x^{-\frac{\nu}{\mathtt{k}}}\mathtt{L}_{\nu}^{\mathtt{k}}(x)\right)-
\left(x^{-\frac{\mu}{\mathtt{k}}}\mathtt{L}_{\mu}^{\mathtt{k}}(x)\right)
\left(x^{-\frac{\nu}{\mathtt{k}}}\mathtt{L}_{\nu}^{\mathtt{k}}(x)\right)'\geq 0
\end{align}
for all $x \in \mathbb{R}^+$ and $\nu\geq \mu>-{3}\mathtt{k}/2$.

Considering \eqref{MSF} and setting $c=-1$ in \eqref{rec-2}
gives
\begin{equation}\label{DF-pf-1}
 \frac{d}{dx}\left(x^{-\frac{\nu}{\mathtt{k}}}\mathtt{L}_{\nu}^{\mathtt{k}}(x)\right)
=\frac{{2}^{-\frac{\nu}{\mathtt{k}}}}
{\sqrt{\pi}\; \Gamma_{\mathtt{k}}
(\nu+\frac{\mathtt{3 k}}{2})}
+\,x^{-\frac{\nu}{\mathtt{k}}}\mathtt{L}_{\nu+\mathtt{k}}^{\mathtt{k}}(x).
\end{equation}
Applying \eqref{DF-pf-1} to the inequality \eqref{11} and using \eqref{modified-kstruve}, we obtain
\begin{equation} \label{last-pf-2}
  \aligned
 & x^{-\frac{\mu+\nu}{\mathtt{k}}}\left\{\mathtt{L}_{\mu+\mathtt{k}}^{\mathtt{k}}(x)
\mathtt{L}_{\nu}^{\mathtt{k}}(x)-\mathtt{L}_{\nu+\mathtt{k}}^{\mathtt{k}}(x)
\mathtt{L}_{\mu}^{\mathtt{k}}(x)\right\}\\
&\geq  \frac{{2}^{-\frac{\nu}{\mathtt{k}}}}
{\sqrt{\pi}\; \Gamma_{\mathtt{k}}
(\nu+\frac{\mathtt{3 k}}{2})} x^{-\frac{\mu}{\mathtt{k}}}\mathtt{L}_{\mu}^{\mathtt{k}}(x)
- \frac{{2}^{-\frac{\mu}{\mathtt{k}}}}
{\sqrt{\pi}\; \Gamma_{\mathtt{k}}
(\mu+\frac{\mathtt{3 k}}{2})} x^{-\frac{\nu}{\mathtt{k}}}\mathtt{L}_{\nu}^{\mathtt{k}}(x)\\
&=  \frac{{2}^{-\frac{\mu+\nu}{\mathtt{k}}}}
{\sqrt{\pi}\; \Gamma_{\mathtt{k}}
(\mu+\frac{\mathtt{3 k}}{2})\Gamma_{\mathtt{k}}
(\nu+\frac{\mathtt{3 k}}{2})}\left(\mathcal{L}_{\mu}^{\mathtt{k}}(x)
- \mathcal{L}_{\nu}^{\mathtt{k}}(x)\right) \geq 0
    \endaligned
\end{equation}
for all $x \in \mathbb{R}^+$ and $\nu\geq \mu>-{3}\mathtt{k}/2$. Here, the last inequality in \eqref{last-pf-2}
follows from the first statement of (ii). Also, we find from \eqref{last-pf-2} that
\begin{equation*}
 \frac{L_{\mu +\mathtt{k}}^{\mathtt{k}}(x)}{L_{\mu}^{\mathtt{k}}(x)} -  \frac{L_{\nu +\mathtt{k}}^{\mathtt{k}}(x)}{L_{\nu}^{\mathtt{k}}(x)}\geq 0
\end{equation*}
for all $x \in \mathbb{R}^+$ and $\nu\geq \mu>-{3}\mathtt{k}/2$.
This proves (iii).
\end{proof}

\vskip 3mm

\begin{remark}\label{thm4-1-rmk-1} \rm

One of the most significant  consequences of Theorem \ref{Th4.1}  is the Tur\'an-type inequality for the function $\mathcal{L}_{\nu}^{\mathtt{k}}$.
The log-convexity of $\mathcal{L}_{\nu}^{\mathtt{k}}$ (the last statement of (ii) in  Theorem \ref{Th4.1})
implies
\begin{equation}\label{thm4-1-rmk-1-eq1}
  \mathcal{L}_{\alpha \nu_1+(1-\alpha)\nu_2}^{\mathtt{k}}(x) \leq \left(\mathcal{L}_{\nu_1}^{\mathtt{k}}\right)^{\alpha}(x)
\left(\mathcal{L}_{\nu_2}^{\mathtt{k}}\right)^{1-\alpha}(x)
\end{equation}
\begin{equation*}
  \left(\alpha \in [0,1];\,\, x,\, \mathtt{k} \in \mathbb{R}^+ ;\,\, \nu_1,\, \nu_2 \in (-3\mathtt{k}/2, \infty) \right).
\end{equation*}
 Choosing $\alpha=1/2$ and setting $\nu_1=\nu-\mathtt{a}$ and $\nu_2=\nu+\mathtt{a}$
 for some $\mathtt{a} \in \mathbb{R}$ in \eqref{thm4-1-rmk-1-eq1}
 yields the following reversed Tur\'an type inequality (cf. \eqref{turan-ine})
 \begin{equation}\label{eqn-Turan-1}
\left(\mathcal{L}_{\nu}^{\mathtt{k}}(x)\right)^2 - \mathcal{L}_{\nu-\mathtt{a}}^{\mathtt{k}}(x)
\mathcal{L}_{\nu+\mathtt{a}}^{\mathtt{k}}(x) \leq 0
\end{equation}
 \begin{equation*}
  \left(x,\, \mathtt{k} \in \mathbb{R}^+ ;\,\, \mathtt{a} \in \mathbb{R},\, \nu \in \left(|\mathtt{a}|-
  3 \mathtt{k}/2, \infty \right) \right).
 \end{equation*}

 \end{remark}

\section{Formulae for the $\mathtt{k}$-Struve function}\label{sec3}

  Here, we present a differential equation, and recurrence relations
  regarding the $\mathtt{k}$-Struve function $\mathtt{S}_{\nu, c}^{\mathtt{k}}$  \eqref{k-Struve}.

   \vskip 3mm
\begin{proposition}\label{pro1}
Let $\mathtt{k} \in \mathbb{R}^+$ and $2\nu >-3 \mathtt{k}$. Then the $\mathtt{k}$-Struve function $\mathtt{S}_{\nu, c}^{\mathtt{k}}$ \eqref{k-Struve}
 satisfies  the following second-order non-homogeneous  differential equation
\begin{equation}\label{diff-eqn}
x^2\,\frac{d^2y}{dx^2}+ x\, \frac{dy}{dx}+\frac{1}{\mathtt{k}^2}\left(c\, \mathtt{k}\,x^2-
\nu^2\right)y= \frac{4\left(\frac{x}{2}\right)^{\frac{\nu}{\mathtt{k}}+1}}{ \mathtt{k} \Gamma_\mathtt{k}( \nu+ \frac{ \mathtt{k}}{2}) \Gamma( \frac{1}{2})}.
\end{equation}
\end{proposition}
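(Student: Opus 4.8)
Here is my plan for proving Proposition \ref{pro1}.

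The plan is to substitute the defining series \eqref{k-Struve} into the differential operator on the left of \eqref{diff-eqn} and check the identity coefficient by coefficient. Write
\begin{equation*}
y(x)=\mathtt{S}_{\nu,c}^{\mathtt{k}}(x)=\sum_{r=0}^{\infty}c_r\,x^{2r+\frac{\nu}{\mathtt{k}}+1},\qquad c_r:=\frac{(-c)^r}{\Gamma_{\mathtt{k}}\!\left(r\mathtt{k}+\nu+\frac{3\mathtt{k}}{2}\right)\Gamma\!\left(r+\frac32\right)2^{2r+\frac{\nu}{\mathtt{k}}+1}}.
\end{equation*}
Differentiating termwise (legitimate on compact subsets of $(0,\infty)$) gives $x^2y''+xy'=\sum_{r\ge 0}c_r\bigl(2r+\tfrac{\nu}{\mathtt{k}}+1\bigr)^2x^{2r+\frac{\nu}{\mathtt{k}}+1}$. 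Because
\begin{equation*}
\Bigl(2r+\tfrac{\nu}{\mathtt{k}}+1\Bigr)^2-\frac{\nu^2}{\mathtt{k}^2}=(2r+1)\Bigl(2r+\tfrac{2\nu}{\mathtt{k}}+1\Bigr)=\frac{4}{\mathtt{k}}\Bigl(r+\tfrac12\Bigr)\Bigl(r\mathtt{k}+\nu+\tfrac{\mathtt{k}}{2}\Bigr),
\end{equation*}
the three terms $x^2y''$, $xy'$ and $-\tfrac{\nu^2}{\mathtt{k}^2}y$ combine into $\tfrac{4}{\mathtt{k}}\sum_{r\ge0}c_r\bigl(r+\tfrac12\bigr)\bigl(r\mathtt{k}+\nu+\tfrac{\mathtt{k}}{2}\bigr)x^{2r+\frac{\nu}{\mathtt{k}}+1}$.

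Next I would rewrite the remaining term as $\tfrac{c}{\mathtt{k}}x^2y=\tfrac{c}{\mathtt{k}}\sum_{r\ge1}c_{r-1}x^{2r+\frac{\nu}{\mathtt{k}}+1}$ (shift $r\mapsto r-1$), so that the whole left-hand side of \eqref{diff-eqn} becomes
\begin{equation*}
\frac{2}{\mathtt{k}}\Bigl(\nu+\tfrac{\mathtt{k}}{2}\Bigr)c_0\,x^{\frac{\nu}{\mathtt{k}}+1}+\sum_{r\ge1}\left[\frac{4}{\mathtt{k}}c_r\Bigl(r+\tfrac12\Bigr)\Bigl(r\mathtt{k}+\nu+\tfrac{\mathtt{k}}{2}\Bigr)+\frac{c}{\mathtt{k}}c_{r-1}\right]x^{2r+\frac{\nu}{\mathtt{k}}+1}.
\end{equation*}
The crux of the argument is the coefficient recurrence
\begin{equation*}
c_r=\frac{-c}{4\bigl(r+\frac12\bigr)\bigl(r\mathtt{k}+\nu+\frac{\mathtt{k}}{2}\bigr)}\,c_{r-1}\qquad(r\ge1),
\end{equation*}
which I would obtain by dividing the closed forms of $c_r$ and $c_{r-1}$ and applying the functional equation $\Gamma_{\mathtt{k}}(t+\mathtt{k})=t\,\Gamma_{\mathtt{k}}(t)$ (the relation \eqref{FR-Gamma-k} already used in the proof of Theorem \ref{Th4.1}) in the shape $\Gamma_{\mathtt{k}}(r\mathtt{k}+\nu+\tfrac{3\mathtt{k}}{2})=(r\mathtt{k}+\nu+\tfrac{\mathtt{k}}{2})\,\Gamma_{\mathtt{k}}((r-1)\mathtt{k}+\nu+\tfrac{3\mathtt{k}}{2})$, together with $\Gamma(r+\tfrac32)=(r+\tfrac12)\Gamma(r+\tfrac12)$. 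This recurrence forces every bracketed coefficient to vanish, so the entire sum over $r\ge1$ disappears.

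It then remains to identify the surviving $r=0$ term with the right-hand side of \eqref{diff-eqn}. Substituting $\Gamma_{\mathtt{k}}(\nu+\tfrac{3\mathtt{k}}{2})=(\nu+\tfrac{\mathtt{k}}{2})\Gamma_{\mathtt{k}}(\nu+\tfrac{\mathtt{k}}{2})$ and $\Gamma(\tfrac32)=\tfrac12\Gamma(\tfrac12)$ into $c_0$ yields
\begin{equation*}
\frac{2}{\mathtt{k}}\Bigl(\nu+\tfrac{\mathtt{k}}{2}\Bigr)c_0\,x^{\frac{\nu}{\mathtt{k}}+1}=\frac{4\,x^{\frac{\nu}{\mathtt{k}}+1}}{\mathtt{k}\,\Gamma_{\mathtt{k}}(\nu+\frac{\mathtt{k}}{2})\,\Gamma(\frac12)\,2^{\frac{\nu}{\mathtt{k}}+1}}=\frac{4\left(\frac{x}{2}\right)^{\frac{\nu}{\mathtt{k}}+1}}{\mathtt{k}\,\Gamma_{\mathtt{k}}(\nu+\frac{\mathtt{k}}{2})\,\Gamma(\frac12)},
\end{equation*}
which is exactly the stated right-hand side, so the verification is complete. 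The hypothesis $2\nu>-3\mathtt{k}$ enters only to guarantee that the $\mathtt{k}$-gamma values $\Gamma_{\mathtt{k}}(r\mathtt{k}+\nu+\tfrac{3\mathtt{k}}{2})$ in the denominators are finite and nonzero, so that the series converges and all the rearrangements above are valid. I do not expect a genuine obstacle: the one step that needs care is the bookkeeping of the half-integer arguments of $\Gamma$ and the $\mathtt{k}$-shifted arguments of $\Gamma_{\mathtt{k}}$ when passing from $c_r$ to $c_{r-1}$; everything else is routine. As a consistency check, setting $\mathtt{k}=1$ and $c=1$ recovers the classical non-homogeneous Struve equation with inhomogeneity $4(x/2)^{\nu+1}/\bigl(\sqrt{\pi}\,\Gamma(\nu+\tfrac12)\bigr)$.
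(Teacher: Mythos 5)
Your proposal is correct and follows essentially the same route as the paper's proof: both substitute the series, use the factorization $\left(2r+\tfrac{\nu}{\mathtt{k}}+1\right)^2-\tfrac{\nu^2}{\mathtt{k}^2}=\tfrac{4}{\mathtt{k}}\left(r+\tfrac12\right)\left(r\mathtt{k}+\nu+\tfrac{\mathtt{k}}{2}\right)$, and then apply $\Gamma_{\mathtt{k}}(t+\mathtt{k})=t\,\Gamma_{\mathtt{k}}(t)$ and $\Gamma(r+\tfrac32)=(r+\tfrac12)\Gamma(r+\tfrac12)$ to isolate the $r=0$ term as the inhomogeneity. The only cosmetic difference is that you phrase the cancellation as a two-term coefficient recurrence, whereas the paper re-indexes the tail of the series and recognizes it as $-\tfrac{c\,x^2}{\mathtt{k}}\,\mathtt{S}_{\nu,c}^{\mathtt{k}}(x)$.
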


\begin{proof}
By using the $\mathtt{k}$-Struve function $\mathtt{S}_{\nu, c}^{\mathtt{k}}$  \eqref{k-Struve}
  and the functional relation
  \begin{equation}\label{FR-Gamma-k}
    \Gamma_{\mathtt{k}} (x + \mathtt{k})=x\, \Gamma_{\mathtt{k}} (x),
  \end{equation}
we find
\begin{align*}
&x^2 \frac{d^2}{dx^2}\mathtt{S}_{\nu, c}^\mathtt{k} (x)+ x \frac{d}{dx}\mathtt{S}_{\nu, c}^\mathtt{k} (x)\\&=\sum_{r=0}^\infty \frac{(-c)^r\left(2r+\frac{\nu}{\mathtt{k}}+1\right)^2 }{ \Gamma_\mathtt{k}(r \mathtt{k}+ \nu+ \frac{3 \mathtt{k}}{2}) \Gamma(r +\frac{3}{2})} \left(\frac{x}{2}\right)^{2r+\frac{\nu}{\mathtt{k}}+1}\\
&= \sum_{r=0}^\infty \frac{(-c)^r (2r+1)\left(2r+\frac{2\nu}{\mathtt{k}}+1\right) }{ \Gamma_\mathtt{k}(r \mathtt{k}+ \nu+ \frac{3 \mathtt{k}}{2}) \Gamma(r +\frac{3}{2})} \left(\frac{x}{2}\right)^{2r+\frac{\nu}{\mathtt{k}}+1}+
\frac{\nu^2}{\mathtt{k}^2} \mathtt{S}_{\nu, c}^\mathtt{k} (x)\\
&=\frac{4}{\mathtt{k}} \sum_{r=0}^\infty \frac{(-c)^r \left(r+\frac{1}{2}\right)\left(r \mathtt{k}+{\nu}+\frac{\mathtt{k}}{2}\right) }{ \Gamma_\mathtt{k}(r \mathtt{k}+ \nu+ \frac{3 \mathtt{k}}{2}) \Gamma(r +\frac{3}{2})} \left(\frac{x}{2}\right)^{2r+\frac{\nu}{\mathtt{k}}+1}+
\frac{\nu^2}{\mathtt{k}^2}\mathtt{S}_{\nu, c}^\mathtt{k} (x)\\
&=\frac{4}{\mathtt{k}}\frac{\left(\frac{x}{2}\right)^{\frac{\nu}{\mathtt{k}}+1}}{ \Gamma_\mathtt{k}( \nu+ \frac{ \mathtt{k}}{2}) \Gamma( \frac{1}{2})} +\frac{4}{\mathtt{k}} \sum_{r=1}^\infty \frac{(-c)^r }{ \Gamma_\mathtt{k}(r \mathtt{k}+ \nu+ \frac{ \mathtt{k}}{2}) \Gamma(r +\frac{1}{2})} \left(\frac{x}{2}\right)^{2r+\frac{\nu}{\mathtt{k}}+1}+
\frac{\nu^2}{\mathtt{k}^2}\mathtt{S}_{\nu, c}^\mathtt{k} (x)\\
 &=\frac{4}{\mathtt{k}}\frac{\left(\frac{x}{2}\right)^{\frac{\nu}{\mathtt{k}}+1}}{ \Gamma_\mathtt{k}( \nu+ \frac{ \mathtt{k}}{2}) \Gamma( \frac{1}{2})}- \frac{c x^2}{\mathtt{k}}\mathtt{S}_{\nu, c}^\mathtt{k} (x) +
\frac{\nu^2}{\mathtt{k}^2}\mathtt{S}_{\nu, c}^\mathtt{k} (x).
\end{align*}
This shows that $y=\mathtt{S}_{\nu, c}^{\mathtt{k}}(x)$ the differential equation \eqref{diff-eqn}.
\end{proof}

\vskip 3mm

\begin{theorem}\label{thm1}
Let $\mathtt{k} \in \mathbb{R}^+$ and $2\nu >-3 \mathtt{k}$. Then the following recurrence relations
hold true:
\begin{align}\label{rec-1}
\frac{d}{dx}\left(x^{\frac{\nu}{\mathtt{k}}}\mathtt{S}_{\nu,c}^{\mathtt{k}}(x)\right)
&=\frac{1}{\mathtt{k}} x^{\frac{\nu}{\mathtt{k}}}\mathtt{S}_{{\nu-\mathtt{k}}}^{\mathtt{k}}(x);\\ \label{rec-2}
\frac{d}{dx}\left(x^{-\frac{\nu}{\mathtt{k}}}\mathtt{S}_{\nu,c}^{\mathtt{k}}(x)\right)
&=\frac{{2}^{-\frac{\nu}{\mathtt{k}}}}
{\sqrt{\pi}\; \Gamma_{\mathtt{k}}
(\nu+\frac{\mathtt{3 k}}{2})}
-c\,x^{-\frac{\nu}{\mathtt{k}}}\mathtt{S}_{\nu+\mathtt{k}, c}^{\mathtt{k}}(x);\\ \label{rec-3}
\frac{1}{\mathtt{k}} \mathtt{S}_{{\nu-\mathtt{k}}}^{\mathtt{k}}(x)-
c\,\mathtt{S}_{\nu+\mathtt{k}, c}^{\mathtt{k}}(x)&=
2\frac{d}{dx}\mathtt{S}_{\nu,c}^{\mathtt{k}}(x)-\frac{ {(x/2)}^{\frac{\nu}{\mathtt{k}}}}
{\sqrt{\pi}\; \Gamma_{\mathtt{k}}
(\nu+\frac{\mathtt{3 k}}{2})};\\ \label{rec-4}
\frac{1}{\mathtt{k}} \mathtt{S}_{{\nu-\mathtt{k}}}^{\mathtt{k}}(x)+
c\,\mathtt{S}_{\nu+\mathtt{k}, c}^{\mathtt{k}}(x)&=
\frac{2\nu}{x\,\mathtt{k} }\mathtt{S}_{\nu,c}^{\mathtt{k}}(x)+\frac{ {(x/2)}^{\frac{\nu}{\mathtt{k}}}}
{\sqrt{\pi}\; \Gamma_{\mathtt{k}}
(\nu+\frac{\mathtt{3 k}}{2})}.
\end{align}
\end{theorem}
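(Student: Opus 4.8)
The plan is to prove \eqref{rec-1} and \eqref{rec-2} by direct term-by-term differentiation of the defining series \eqref{k-Struve}, and then to deduce \eqref{rec-3} and \eqref{rec-4} from them by purely algebraic manipulation, without returning to the series.

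\textbf{Step 1: the two differential recurrences.} For \eqref{rec-1}, I would multiply \eqref{k-Struve} by $x^{\nu/\mathtt{k}}$, differentiate termwise, and observe that the factor produced, $2r+\frac{2\nu}{\mathtt{k}}+1$, equals $\frac{2}{\mathtt{k}}\bigl(r\mathtt{k}+\nu+\frac{\mathtt{k}}{2}\bigr)$. Writing $\Gamma_\mathtt{k}(r\mathtt{k}+\nu+\tfrac{3\mathtt{k}}{2})=\bigl(r\mathtt{k}+\nu+\tfrac{\mathtt{k}}{2}\bigr)\Gamma_\mathtt{k}(r\mathtt{k}+\nu+\tfrac{\mathtt{k}}{2})$ via \eqref{FR-Gamma-k}, this factor cancels, and the remaining series is exactly $\frac1{\mathtt{k}}x^{\nu/\mathtt{k}}\mathtt{S}_{\nu-\mathtt{k},c}^{\mathtt{k}}(x)$, obtained by the substitution $\nu\mapsto\nu-\mathtt{k}$ in \eqref{k-Struve}. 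For \eqref{rec-2}, I would instead multiply by $x^{-\nu/\mathtt{k}}$ and differentiate termwise; the factor now produced is $2r+1$, and since $\Gamma(r+\tfrac32)=\bigl(r+\tfrac12\bigr)\Gamma(r+\tfrac12)$ one has $\frac{2r+1}{\Gamma(r+3/2)}=\frac{2}{\Gamma(r+1/2)}$. Peeling off the $r=0$ term gives precisely $\frac{2^{-\nu/\mathtt{k}}}{\sqrt\pi\,\Gamma_\mathtt{k}(\nu+3\mathtt{k}/2)}$ (using $\Gamma(\tfrac12)=\sqrt\pi$), and re-indexing $r\mapsto r+1$ in the remaining sum, together with the identity $\Gamma_\mathtt{k}((r+1)\mathtt{k}+\nu+\tfrac{3\mathtt{k}}{2})=\Gamma_\mathtt{k}(r\mathtt{k}+(\nu+\mathtt{k})+\tfrac{3\mathtt{k}}{2})$, identifies the tail with $-c\,x^{-\nu/\mathtt{k}}\mathtt{S}_{\nu+\mathtt{k},c}^{\mathtt{k}}(x)$.

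\textbf{Step 2: the mixed recurrences.} Expanding the left-hand side of \eqref{rec-1} by the product rule and dividing by $x^{\nu/\mathtt{k}}$ yields the first-order identity
\[
\frac{d}{dx}\mathtt{S}_{\nu,c}^{\mathtt{k}}(x)+\frac{\nu}{\mathtt{k}\,x}\,\mathtt{S}_{\nu,c}^{\mathtt{k}}(x)=\frac1{\mathtt{k}}\,\mathtt{S}_{\nu-\mathtt{k},c}^{\mathtt{k}}(x),
\]
while expanding the left-hand side of \eqref{rec-2} and multiplying by $x^{\nu/\mathtt{k}}$ (noting $x^{\nu/\mathtt{k}}2^{-\nu/\mathtt{k}}=(x/2)^{\nu/\mathtt{k}}$) yields
\[
\frac{d}{dx}\mathtt{S}_{\nu,c}^{\mathtt{k}}(x)-\frac{\nu}{\mathtt{k}\,x}\,\mathtt{S}_{\nu,c}^{\mathtt{k}}(x)=\frac{(x/2)^{\nu/\mathtt{k}}}{\sqrt\pi\,\Gamma_\mathtt{k}(\nu+3\mathtt{k}/2)}-c\,\mathtt{S}_{\nu+\mathtt{k},c}^{\mathtt{k}}(x).
\]
Adding these two identities cancels the term $\frac{\nu}{\mathtt{k}x}\mathtt{S}_{\nu,c}^{\mathtt{k}}$ and, after rearranging, gives \eqref{rec-3}; subtracting them cancels $\frac{d}{dx}\mathtt{S}_{\nu,c}^{\mathtt{k}}$ and gives \eqref{rec-4}.

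All of this is elementary, so the only genuine point to address is the legitimacy of differentiating the series term by term. That is handled by noting that, after removing the prefactor $x^{\nu/\mathtt{k}+1}$, each series involved is of the form $\sum_{r\ge0}\alpha_r\,(x/2)^{2r}$ whose coefficients decay faster than any geometric rate (the $\Gamma_\mathtt{k}$-factor in the denominator grows super-exponentially), hence defines an entire function; therefore all the series, together with their termwise derivatives, converge locally uniformly on $\mathbb{R}^+$, which is where the identities are asserted. I expect the routine bookkeeping of the $\Gamma_\mathtt{k}$-shifts and of the index shift in Step 1 to be the most error-prone part, but no real obstacle arises.
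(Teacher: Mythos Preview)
Your proposal is correct and follows essentially the same approach as the paper: termwise differentiation of the series \eqref{k-Struve} (using \eqref{FR-Gamma-k} for \eqref{rec-1} and the analogous $\Gamma$-shift for \eqref{rec-2}), followed by the product-rule expansions and their sum/difference to obtain \eqref{rec-3} and \eqref{rec-4}. Your write-up is in fact more detailed than the paper's, which omits the computation for \eqref{rec-2} entirely and does not comment on the legitimacy of termwise differentiation.
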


\begin{proof}
From \eqref{diff-eqn} we have
\[ x^{\frac{\nu}{\mathtt{k}}}\mathtt{S}_{\nu,c}^{\mathtt{k}}(x)=
\sum_{r=0}^{\infty}\frac{(-c)^r}
{\Gamma_{\mathtt{k}}(r\mathtt{k}+\nu+\frac{3\mathtt{k}}{2})\Gamma(r+\frac{3}{2})}
\frac{x^{2r+\frac{2\nu}{\mathtt{k}}+1}}{2^{2r+\frac{\nu}{\mathtt{k}}+1}},\]
which, upon differentiating with respect to $x$ and using the relation  \eqref{FR-Gamma-k},
yields
\begin{align*}
& \frac{d}{dx}\left(x^{\frac{\nu}{\mathtt{k}}}\mathtt{S}_{\nu,c}^{\mathtt{k}}(x)\right)
=\sum_{r=0}^{\infty}\frac{(-c)^r(2r+\frac{2\nu}{\mathtt{k}}+1)}
{\Gamma_{\mathtt{k}}(r\mathtt{k}+\nu+\frac{3\mathtt{k}}{2})\Gamma(r+\frac{3}{2})}
\frac{x^{2r+\frac{2\nu}{\mathtt{k}}}}{2^{2r+\frac{\nu}{\mathtt{k}}+1}}\\
&\hskip 10mm =\frac{x^{\frac{\nu}{\mathtt{k}}}}{\mathtt{k}}\sum_{r=0}^{\infty}\frac{(-c)^r}
{\Gamma_{\mathtt{k}}
(r\mathtt{k}+\nu+\frac{\mathtt{k}}{2})\Gamma(r+\frac{3}{2})}
\left(\frac{x}{2}\right)^{2r+\frac{\nu}{\mathtt{k}}}
=\frac{1}{\mathtt{k}}\;  x^{\frac{\nu}{\mathtt{k}}}\; \mathtt{S}_{\nu-\mathtt{k} , c}^{\mathtt{k}}(x).
\end{align*}
This prove  \eqref{rec-1}.
We can establish the result \eqref{rec-2} by a similar argument as in the proof of \eqref{rec-1}.
We omit the details.

Similarly, from \eqref{k-Struve}, we obtain
\begin{equation}\label{rec-1-1}
  x\frac{d}{dx}\mathtt{S}_{\nu,c}^{\mathtt{k}}(x)+\frac{\nu}{\mathtt{k}}
\mathtt{S}_{\nu,c}^{\mathtt{k}}(x)
=\frac{x}{\mathtt{k}} \mathtt{S}_{{\nu-\mathtt{k}}}^{\mathtt{k}}(x)
\end{equation}
and
\begin{equation}\label{rec-1-2}
 x\frac{d}{dx}\mathtt{S}_{\nu,c}^{\mathtt{k}}(x)-\frac{\nu}{\mathtt{k}}
\mathtt{S}_{\nu,c}^{\mathtt{k}}(x) =\frac{x {(x/2)}^{\frac{\nu}{\mathtt{k}}}}
{\sqrt{\pi}\; \Gamma_{\mathtt{k}}
(\nu+\frac{\mathtt{3 k}}{2})}
-c\,x\mathtt{S}_{\nu+\mathtt{k}, c}^{\mathtt{k}}(x).
\end{equation}
Adding and subtracting each side of \eqref{rec-1-1} and \eqref{rec-1-2}
 yields,  respectively, the results \eqref{rec-3} and \eqref{rec-4}.

\end{proof}

\section{Integral representations}\label{sec4}

Here, we present two integral representations for the function $\mathtt{S}_{\nu, c}^{\mathtt{k}}$.
\vskip 3mm

\begin{theorem}\label{thm2} Let $\mathtt{k} \in \mathbb{R}^+$, $\Re (\nu)> -\frac{\mathtt{k}}{2}$, and $\alpha\in \mathbb{R} \setminus \{0\}$. Then
  \begin{align}\label{thm2-eq1}
\mathtt{S}_{\nu, \alpha^2}^{\mathtt{k}} (x) =\frac{2\sqrt{\mathtt{k}}}{\alpha^2\sqrt{\pi}\Gamma_{\mathtt{k}}\left(\nu+\frac{\mathtt{k}}{2}\right)} \left(\frac{x }{2}\right)^{\frac{\nu}{\mathtt{k}}}\int_0^1   (1-t^2)^{\frac{\nu}{\mathtt{k}}-\frac{1}{2}} \sin\left(\frac{\alpha x t}{ \sqrt{\mathtt{k}}}\right) dt.
\end{align}
and
\begin{align}\label{thm2-eq2}
\mathtt{S}_{\nu, -\alpha^2}^{\mathtt{k}} (x) =\frac{2\sqrt{\mathtt{k}}}{\sqrt{\pi}\Gamma_{\mathtt{k}}\left(\nu+\frac{\mathtt{k}}{2}\right)} \left(\frac{x }{2}\right)^{\frac{\nu}{\mathtt{k}}}\int_0^1   (1-t^2)^{\frac{\nu}{\mathtt{k}}-\frac{1}{2}} \sinh\left(\frac{\alpha x t}{ \sqrt{\mathtt{k}}}\right) dt.
\end{align}
In particular, we have
\begin{equation}\label{pc-thm2-eq1}
  1-\cos\left(\frac{\alpha x}{\sqrt{\mathtt{k}}}\right)= \frac{\alpha}{\mathtt{k}}\sqrt{\frac{\pi x}{2}}\mathtt{S}_{\frac{1}{2}, \alpha^2}^{\mathtt{k}} (x)
\end{equation}
and
\begin{equation}\label{pc-thm2-eq2}
 \cosh\left(\frac{\alpha x}{\sqrt{\mathtt{k}}}\right)-1= \frac{\alpha}{\mathtt{k}}\sqrt{\frac{\pi x}{2}}\mathtt{S}_{\frac{\nu}{\mathtt{k}}, -\alpha^2}^{\mathtt{k}} (x).
\end{equation}

\end{theorem}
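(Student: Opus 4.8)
The plan is to prove \eqref{thm2-eq1} by inserting the classical Euler-type Beta integral for the Gamma factor $\Gamma(r+\tfrac32)$ into the series \eqref{k-Struve}, then recognizing the resulting power series in $t$ as a sine. Concretely, I would start from
\[
\mathtt{S}_{\nu,\alpha^2}^{\mathtt{k}}(x)=\sum_{r=0}^\infty \frac{(-\alpha^2)^r}{\Gamma_{\mathtt{k}}(r\mathtt{k}+\nu+\tfrac{3\mathtt{k}}{2})\,\Gamma(r+\tfrac32)}\left(\frac{x}{2}\right)^{2r+\frac{\nu}{\mathtt{k}}+1},
\]
and apply the functional relation \eqref{FR-Gamma-k} in the form $\Gamma_{\mathtt{k}}(r\mathtt{k}+\nu+\tfrac{3\mathtt{k}}{2})=\bigl(\prod_{j=0}^{r}(j\mathtt{k}+\nu+\tfrac{\mathtt{k}}{2})\bigr)\Gamma_{\mathtt{k}}(\nu+\tfrac{\mathtt{k}}{2})/(r\mathtt{k}+\nu+\tfrac{\mathtt{k}}{2})$ — more cleanly, I would use $\Gamma_{\mathtt{k}}(r\mathtt{k}+\nu+\tfrac{3\mathtt{k}}{2})=\mathtt{k}^{r+1}\,\Gamma\!\bigl(\tfrac{\nu}{\mathtt{k}}+\tfrac32+r\bigr)\mathtt{k}^{\frac{\nu}{\mathtt{k}}+\frac12-1}$, i.e. the identity $\Gamma_{\mathtt{k}}(x)=\mathtt{k}^{x/\mathtt{k}-1}\Gamma(x/\mathtt{k})$, to convert everything to ordinary Gamma functions. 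That turns the $\mathtt{k}$-Struve function into a constant times $\sum_r \frac{(-1)^r(\alpha^2 x^2/(4\mathtt{k}))^r}{\Gamma(\frac{\nu}{\mathtt{k}}+\frac32+r)\Gamma(r+\frac32)}$ up to the prefactor $(x/2)^{\nu/\mathtt{k}+1}$.

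Next I would invoke the Beta integral $B(r+\tfrac12,\tfrac{\nu}{\mathtt{k}}+\tfrac12)=\dfrac{\Gamma(r+\frac12)\Gamma(\frac{\nu}{\mathtt{k}}+\frac12)}{\Gamma(r+\frac{\nu}{\mathtt{k}}+1)}=2\int_0^1 t^{2r}(1-t^2)^{\frac{\nu}{\mathtt{k}}-\frac12}\,dt$, which is legitimate precisely when $\Re(\tfrac{\nu}{\mathtt{k}}+\tfrac12)>0$, i.e. $\Re(\nu)>-\tfrac{\mathtt{k}}{2}$ — this is where the hypothesis on $\nu$ is used. Wait: the denominators in the series are $\Gamma(\frac{\nu}{\mathtt{k}}+\frac32+r)\Gamma(r+\frac32)$, not $\Gamma(r+\frac{\nu}{\mathtt{k}}+1)\Gamma(r+\frac12)$, so I would first rewrite $\Gamma(\frac{\nu}{\mathtt{k}}+\frac32+r)=(\frac{\nu}{\mathtt{k}}+\frac12+r)\Gamma(\frac{\nu}{\mathtt{k}}+\frac12+r)$ and $\Gamma(r+\frac32)=(r+\frac12)\Gamma(r+\frac12)$ — actually it is cleaner to note that $2r+\frac{2\nu}{\mathtt{k}}+1 = 2(r+\frac{\nu}{\mathtt{k}}+\frac12)$ and absorb the factor differently; the bookkeeping here is the fiddly part. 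After substituting the integral representation and swapping sum and integral (justified by uniform convergence of the entire series on $[0,1]$ since the coefficients decay factorially), the sum over $r$ becomes $\sum_r \frac{(-1)^r(\alpha x t/\sqrt{\mathtt{k}})^{2r+1}}{(2r+1)!}=\sin(\alpha x t/\sqrt{\mathtt{k}})$ after pulling out one power of $t$ and the right normalizing constants, using $\Gamma(r+\frac32)=\frac{(2r+1)!\sqrt\pi}{4^r r!}$ and $\Gamma(\frac12)=\sqrt\pi$. Collecting the constants yields the factor $\frac{2\sqrt{\mathtt{k}}}{\alpha^2\sqrt\pi\,\Gamma_{\mathtt{k}}(\nu+\mathtt{k}/2)}$.

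For \eqref{thm2-eq2}, the argument is identical with $\alpha^2$ replaced by $-\alpha^2$: the alternating sign disappears, $\sum_r \frac{(\alpha x t/\sqrt{\mathtt{k}})^{2r+1}}{(2r+1)!}=\sinh(\alpha x t/\sqrt{\mathtt{k}})$, and one factor of $\alpha^{-1}$ differs in the prefactor, matching the stated constant $\frac{2\sqrt{\mathtt{k}}}{\sqrt\pi\,\Gamma_{\mathtt{k}}(\nu+\mathtt{k}/2)}$. Finally, the special cases \eqref{pc-thm2-eq1} and \eqref{pc-thm2-eq2} follow by setting $\nu=\tfrac12$ (so $\frac{\nu}{\mathtt{k}}-\frac12$ becomes $\frac1{2\mathtt{k}}-\frac12$ — hmm, one should rather take $\nu=\mathtt{k}/2$ so the exponent $(1-t^2)^{\frac{\nu}{\mathtt{k}}-\frac12}$ vanishes; I would double-check the indexing of the special cases against the theorem as stated, since the subscript $\frac{\nu}{\mathtt{k}}$ in \eqref{pc-thm2-eq2} looks like a typo for $\frac12$) so that the integrand reduces to $\sin$ or $\sinh$ alone and $\int_0^1\sin(\beta t)\,dt=(1-\cos\beta)/\beta$, $\int_0^1\sinh(\beta t)\,dt=(\cosh\beta-1)/\beta$. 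The main obstacle I anticipate is not conceptual but bookkeeping: correctly matching the $\mathtt{k}$-dependent constants and the half-integer Gamma values so the final normalization comes out exactly as claimed, and verifying the precise form of the special cases given possible typographical slips in the subscripts.
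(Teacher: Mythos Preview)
Your approach is essentially the paper's: the paper uses the $\mathtt{k}$-Beta integral $\mathtt{B}_{\mathtt{k}}\bigl((r+1)\mathtt{k},\,\nu+\tfrac{\mathtt{k}}{2}\bigr)=\dfrac{2}{\mathtt{k}}\displaystyle\int_0^1 t^{2r+1}(1-t^2)^{\frac{\nu}{\mathtt{k}}-\frac12}\,dt$ directly (rather than first converting to ordinary Gammas via $\Gamma_{\mathtt{k}}(x)=\mathtt{k}^{x/\mathtt{k}-1}\Gamma(x/\mathtt{k})$ and then invoking the classical Beta integral, as you propose---a purely cosmetic difference), substitutes it into the series \eqref{k-Struve}, interchanges sum and integral, and uses the Legendre duplication formula to collapse the remaining Gamma factors into $(2r+1)!$, yielding the Maclaurin series of $\sin$ or $\sinh$. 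Your suspicion about the special cases is also correct: the paper obtains \eqref{pc-thm2-eq1} and \eqref{pc-thm2-eq2} by setting $\nu=\mathtt{k}/2$, so the subscripts $\tfrac12$ and $\tfrac{\nu}{\mathtt{k}}$ in the statement are typographical slips for $\tfrac{\mathtt{k}}{2}$.
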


\begin{proof}
 We begin by recalling  the $\mathtt{k}$-beta function (see \cite{Diaz})
\begin{align}\label{eqn-6}
\mathtt{B}_{\mathtt{k}}(x, y)=\frac{ \Gamma_{\mathtt{k}}(x) \Gamma_{\mathtt{k}}(y)}{\Gamma_{\mathtt{k}}(x+y)}=\frac{1}{\mathtt{k}} \int_0^1 t^{\frac{x}{\mathtt{k}}-1}(1-t)^{\frac{y}{\mathtt{k}}-1} \,dt
\end{align}
\begin{equation*}
  \left(\mathtt{k} \in \mathbb{R}^+;\, \min \{\Re(x),\,\Re(y)\} >0   \right).
\end{equation*}
Replacing $t$ by $t^2$  on the right-hand-sided integral in \eqref{eqn-6}, we obtain
\begin{align}\label{eqn-7}
\mathtt{B}_{\mathtt{k}}(x, y)=\frac{2}{\mathtt{k}} \int_0^1 t^{\frac{2x}{\mathtt{k}}-1}(1-t^2)^{\frac{y}{\mathtt{k}}-1} \, dt.
\end{align}
Setting $x= (r+1)\mathtt{k}$ and
$y= \nu+\mathtt{k}/2$ in \eqref{eqn-7} gives
\begin{align}\label{eqn-11}
\frac{ 1}{\Gamma_{\mathtt{k}}(r \mathtt{k}+\nu+\mathtt{k})}=\frac{2}{\Gamma_{\mathtt{k}}\left(\left(r+1\right) \mathtt{k}\right) \Gamma_{\mathtt{k}}\left(\nu+\frac{\mathtt{k}}{2}\right) } \int_0^1 t^{2r}(1-t^2)^{\frac{\nu}{\mathtt{k}}-\frac{1}{2}} \, dt.
\end{align}
Applying the known identity
\begin{align}\label{eqn-12}
\Gamma_{\mathtt{k}}(\mathtt{k} x)= \mathtt{k}^{x-1} \Gamma(x)
\end{align}
and the Legendre  duplication formula (see \cite{ABRAMOWITZ, Andrews-Askey, Sr-Ch-12})
\begin{equation}\label{eqn:Legendre-duplication}
  \mathrm{\Gamma}{(z)}\mathrm{\Gamma}{\left(z+\tfrac{1}{2}\right)}= 2^{1-2z}\; \sqrt{\pi}\; \mathrm{\Gamma}{(2z)}
\end{equation}
to the function  $\mathtt{S}_{\nu, c}^{\mathtt{k}}$ with \eqref{eqn-11}, we get
\begin{equation}\label{eqn-13}
  \mathtt{S}_{\nu, c}^{\mathtt{k}} (x)
  = \frac{2\sqrt{\mathtt{k}}}{\sqrt{\pi}\Gamma_{\mathtt{k}}\left(\nu+\frac{\mathtt{k}}{2}\right)} \left(\frac{x }{2}\right)^{\frac{\nu}{\mathtt{k}}}\int_0^1   (1-t^2)^{\frac{\nu}{\mathtt{k}}-\frac{1}{2}} \sum_{r=0}^\infty \frac{(-c)^r }{ (2r+1)!} \left(\frac{x t}{ \sqrt{\mathtt{k}}}\right)^{2r+1}\, dt.
\end{equation}
Finally, setting $c=\pm \alpha^2$ $(\alpha \in \mathbb{R}\setminus \{0\})$ in \eqref{eqn-13} yields, respectively,
 the desired results \eqref{thm2-eq1} and \eqref{thm2-eq2}.
Further, setting  $\nu=\mathtt{k}/2$ in \eqref{thm2-eq1} and \eqref{thm2-eq2}
yields, respectively,  the desired results \eqref{pc-thm2-eq1} and \eqref{pc-thm2-eq2}.

\end{proof}

\end{document}